\numberwithin{equation}{section}
\font\tencyr=wncyr10 
\font\tencyi=wncyi10 
\font\tencysc=wncysc10 
\def\rus{\tencyr\cyracc}
\def\rusi{\tencyi\cyracc}
\def\rusc{\tencysc\cyracc}
\newtheorem{thm}{Theorem}[section]
\newtheorem{prop}[thm]{Proposition}
\theoremstyle{remark}
\newtheorem{rmk}[thm]{Remark}
\theoremstyle{definition}
\newtheorem{ex}[thm]{Example}
\newtheorem{df}{Definition}
\newcommand {\ah}{{\mathfrak a}}
\newcommand {\g}{{\mathfrak g}}
\newcommand {\h}{{\mathfrak h}}
\newcommand {\es}{{\mathfrak s}}
\newcommand {\te}{{\mathfrak t}}
\newcommand {\slv}{{\mathfrak{sl}(V)}}
\newcommand {\BQ}{{\mathbb Q}}
\newcommand {\sfr}{\mathsf R}
\newcommand{\lb}{\lambda}
\newcommand{\ap}{\alpha}
\newcommand{\eus}{\EuScript}
\newcommand {\ad}{{\mathrm{ad}}}
\newcommand {\hot}{{\mathsf{ht}}}
\newcommand {\ind}{{\mathsf{ind}}}
\newcommand {\tr}{{\mathrm{tr\,}}}
\newcommand {\tri}{\mathfrak{sl}_2}
\newcommand {\GR}[2]{{\textrm{{\bf #1}}}_{#2}}
\newcommand {\un}{\underline}
\font\Bbbfont=msbm10 scaled 1200%
\def\bbk{\hbox {\Bbbfont\char'174}}
\begin{document}
\setlength{\parskip}{3pt plus 5pt minus 0pt}
\hfill { {\scriptsize August 14, 2008}}
\vskip1ex

\title%
{On the Dynkin index of a principal $\tri$-subalgebra}
\author[D.\,Panyushev]{Dmitri I.~Panyushev}
\address[]{Independent University of Moscow,
Bol'shoi Vlasevskii per. 11, 119002 Moscow, \ Russia
\hfil\break\indent
Institute for Information Transmission Problems, B. Karetnyi per. 19, Moscow 127994
}
\email{panyush@mccme.ru}
\urladdr{http://www.mccme.ru/~panyush}
\thanks{Supported in part by  R.F.B.R. grant 
06--01--72550.}
\maketitle

\section*{Introduction}
\noindent
The ground field $\bbk$ is algebraically closed and of characteristic zero.
Let $\g$ be a simple Lie algebra  
over $\bbk$. 
The goal of this note is to prove a closed formula for the Dynkin index of a 
principal $\tri$-subalgebra of $\g$, see Theorem~\ref{thm:main}.
The key step in the proof  uses  the ``strange formula'' of Freudenthal--de Vries.
As an application, we (1) compute the Dynkin index any simple $\g$-module regarded as $\tri$-module
and (2) obtain an identity connecting the exponents of $\g$ and the dual Coxeter numbers of 
both $\g$ and $\g^\vee$, see Section~\ref{sect:appl}.

\section{The Dynkin index of representations and subalgebras} 
\label{sect:di}

\noindent
Let $\g$ be a simple finite-dimensional Lie algebra of rank $n$. Let $\te$ be a Cartan
subalgebra, and $\Delta$ the set of roots of $\te$ in $\g$. Choose a set of positive roots
$\Delta^+$ in $\Delta$. Let $\Pi$ be the set of simple roots and $\theta$ the highest root in
$\Delta^+$. As usual, $\rho=\frac{1}{2}\sum_{\gamma>0}\gamma$.
The $\BQ$-span of all roots is a ($\BQ$-)subspace of $\te^*$, denoted $\eus E$.
Choose a non-degenerate invariant symmetric bilinear form
$(\ ,\ )_\g$ on $\g$ 
as follows. The restriction of $(\ ,\ )_\g$ to $\te$ is 
non-degenerate, hence it induces the isomorphism of $\te$ and $\te^*$ and 
a non-degenerate bilinear form on $\te^*$. We require that $(\theta,\theta)_\g=2$,
i.e.,  $(\beta,\beta)_\g=2$ of any {long} root $\beta$ in $\Delta$.

\begin{df}[E.B.~Dynkin]  {\ }\phantom{\ }
\begin{enumerate}
\item Let  $\es$ be a simple subalgebra of  $\g$.
The {\it Dynkin index\/} of $\es$ in $\g$ 
is defined by  
\[
   \ind(\es\hookrightarrow\g)=\displaystyle \frac{(x,x)_\g}{(x,x)_\es}, \quad  x\in\es . 
\]
\item If $\nu: \g\to \slv$  is a representation of $\g$, then the {\it Dynkin index of
the representation\/}, denoted $\ind_D(\g,V)$ or $\ind_D(\g,\nu)$, is defined by
\[
  \ind_D(\g,V)=\ind(\g\hookrightarrow\slv) .
\]
\end{enumerate}
\end{df}

\noindent
It is not hard to verify that, for the simple Lie algebra $\slv$, the normalised bilinear form
is given by  $(x,x)_{\slv}= \tr(x^2)$, $x\in\slv$.
Therefore, a more explicit expression for the Dynkin index of a representation 
$\nu: \g\to \slv$ is
\begin{equation}   \label{eq:ind_yavno}
  \ind_D(\g,V)=\frac{\tr \bigl(\nu(x)^2\bigr)}{(x,x)_\g} .
\end{equation}
Conversely,  the index of a simple subalgebra can 
be expressed via indices of  representations. Namely, 
\begin{equation}    \label{eq:ind-subalg}
   \ind(\es\hookrightarrow\g)=\frac{\ind_D(\es, \g)}{\ind_D(\g,\ad_\g)} .
\end{equation}
The denominator in the right hand side represents the index of the adjoint representation
of $\g$, and the numerator represents the index of the $\es$-module $\g$.

The following properties easily follow from the definition:
\\
{\sf Multiplicativity}:
 If $\h\subset\es\subset\g$ are simple Lie algebras, then 
$\ind(\h\subset\es){\cdot}\ind(\es\subset\g)=\ind(\h\subset\g)$.
\\
{\sf Additivity}:  \ $\ind_D(\g, V_1\oplus V_2)=\ind_D(\g, V_1)+\ind_D(\g, V_2)$.
It is therefore sufficient to determine the indices for the irreducible representations.

\begin{thm}[Dynkin, \protect{\cite[Theorem\,2.5]{dy}}]   \label{thm:d-1952}
Let $V_\lb$ be a  simple finite-dimensional $\g$-module with highest weight $\lb$. 
Then 
\[
   \ind_D(\g, V_\lb)=\frac{\dim V_\lb}{\dim\g} (\lb,\lb+2\rho)_\g .
\]
\end{thm}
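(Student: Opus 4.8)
The plan is to identify $\ind_D(\g,V_\lb)$ with the proportionality constant between the trace form of the representation and the normalised form $(\ ,\ )_\g$, and then to pin down that constant by evaluating a single trace in two different ways via the Casimir operator.

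First, observe that the symmetric bilinear form $B_V(x,y)=\tr\bigl(\nu(x)\nu(y)\bigr)$ on $\g$ is invariant. Since $\g$ is simple, the space of invariant symmetric bilinear forms is one-dimensional, so $B_V=c\,(\ ,\ )_\g$ for a scalar $c$; comparing with \eqref{eq:ind_yavno} (i.e.\ polarising that diagonal identity) shows $c=\ind_D(\g,V_\lb)$. Thus
\[
  \tr\bigl(\nu(x)\nu(y)\bigr)=\ind_D(\g,V_\lb)\,(x,y)_\g \qquad \text{for all } x,y\in\g .
\]

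Next, choose a basis $\{e_i\}$ of $\g$ and let $\{e^i\}$ be the dual basis with respect to $(\ ,\ )_\g$, so that $(e_i,e^j)_\g=\delta_i^j$. Form the Casimir operator $\Omega=\sum_i\nu(e_i)\nu(e^i)$ acting on $V_\lb$. Summing the identity of the previous paragraph over the dual bases gives, on the one hand,
\[
  \tr_{V_\lb}(\Omega)=\sum_i\tr\bigl(\nu(e_i)\nu(e^i)\bigr)=\ind_D(\g,V_\lb)\sum_i(e_i,e^i)_\g=\ind_D(\g,V_\lb)\cdot\dim\g ,
\]
using $\sum_i(e_i,e^i)_\g=\dim\g$. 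On the other hand, $\Omega$ is $\g$-invariant, so by Schur's lemma it acts on the irreducible module $V_\lb$ as a scalar, and by the standard computation of the Casimir eigenvalue (Freudenthal's formula, taken with respect to the same form $(\ ,\ )_\g$) this scalar is $(\lb,\lb+2\rho)_\g$. Hence $\tr_{V_\lb}(\Omega)=(\lb,\lb+2\rho)_\g\cdot\dim V_\lb$. Equating the two expressions yields
\[
  \ind_D(\g,V_\lb)\cdot\dim\g=(\lb,\lb+2\rho)_\g\cdot\dim V_\lb ,
\]
which is the asserted formula.

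The only substantive input is the Casimir eigenvalue $(\lb,\lb+2\rho)_\g$, obtained by evaluating $\Omega$ on a highest-weight vector after splitting it along the root decomposition; everything else is elementary bookkeeping with dual bases. The one point requiring care is that this eigenvalue must be computed with the very same normalised form $(\ ,\ )_\g$ used to define the index, so that the normalisations agree on both sides of the final identity.
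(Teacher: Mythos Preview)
Your argument is correct: using the Casimir element associated with the normalised form $(\ ,\ )_\g$ and comparing its trace on $V_\lb$ computed in two ways is precisely the standard modern proof of this identity, and every step you give is sound (the key identities $\sum_i(e_i,e^i)_\g=\dim\g$ and $\Omega\vert_{V_\lb}=(\lb,\lb+2\rho)_\g\,\mathrm{id}$ are both classical and correctly invoked).

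There is nothing to compare against, however: the paper does not prove this statement at all. It is quoted as Dynkin's 1952 result with a bare citation \cite[Theorem\,2.5]{dy} and no argument is supplied; the surrounding text only comments on the integrality of the index. So your proof is not a different route, it is simply a proof where the paper offers none. What you have written is in fact the standard textbook derivation (essentially the one in Bourbaki or Kac), and it is arguably cleaner than Dynkin's original 1952 computation, which proceeds by a direct weight-sum calculation rather than via the Casimir operator.
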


\noindent
Although it is not obvious from the definition, the Dynkin index of a representation is an integer.
This was  proved by E.B.~Dynkin \cite[Theorem\,2.2]{dy} using lengthy classification results.
Later, he gave a better proof that is based on  a topological interpretation of the index.
A short algebraic proof is given in \cite[Ch.\,I, \S 3.10]{on}.

\begin{ex}   \label{ex:1}
{\ }\phantom{\ }\\
1) \ Let $\sfr_d$ be the simple $\tri$-module of dimension $d+1$. Then
$\ind_D(\tri,\sfr_d)= \genfrac{(}{)}{0pt}{}{d+2}{3}$.

\noindent
2) \ Recall that $\theta$ is the highset root in $\Delta^+$.
By Theorem~\ref{thm:d-1952}, 
\[
  \ind_D(\g,\ad)=(\theta,\theta+2\rho)_\g=
(\theta,\theta)_\g(1+(\rho,\theta^\vee)_\g)=2(1+(\rho,\theta^\vee)_\g) .
\]
Note that the value $(\rho,\theta^\vee)_\g$ does not depend on the normalisation of the bilinear form.
The integer $1+(\rho,\theta^\vee)$ is customary called the {\it dual Coxeter number\/}
of $\g$, and we denote it by $h^*(\g)$. Thus, $\ind_D(\g,\ad)=2h^*(\g)$.
 In the simply-laced case, $h^*(\g)=h(\g)$---the usual
Coxeter number. For the other simple Lie algebras, we have
$h^*(\GR{B}{n})=2n{-}1$, $h^*(\GR{C}{n})=n{+}1$, $h^*(\GR{F}{4})=9$,
$h^*(\GR{G}{2})=4$.
\end{ex}
Andreev, Vinberg, and Elashvili applied the Dynkin index of representations to some
invariant-theoretic problem \cite{AVE}. To this end, they adjusted the index so that it does not
depend on the choice of a bilinear form on $\g$.

\begin{df}[Andreev--Vinberg--Elashvili, 1967]  \label{def:ave}
Let $\nu: \g\to \slv$ be a finite-dimensional representation of a simple Lie algebra. Then 
\[
   \ind_{AVE}(\g,V):=\frac{\ind_D(\g, V)}{\ind_D(\g,\ad)}=
   \frac{\tr \bigl(\nu(x)^2\bigr)}{\tr\bigl(\ad_\g(x)^2
      \bigr)}, \quad x\in\g .
\]
\end{df}

\noindent
It follows that 
$\ind_{AVE}(\g,\ad_\g)=1$
\ and \ 
\[
   \ind_{AVE}(\g,V_\lb)=\frac{\dim V_\lb}{\dim\g}\cdot 
               \frac{(\lb,\lb+2\rho)_\g}{(\theta, \theta+2\rho)_\g} .
\]

\section{The ``strange formula''} 
\label{sect:strange}

\noindent 
Let $\eus K$ be the Killing form on $\g$, i.e., $\eus K(x,x)=\tr(\ad_\g(x)^2)$, $x\in \g$.
The induced bilinear form on $\te^*$ (and $\eus E$) is denoted by $\langle\ ,\ \rangle$. 
It is the so-called
{\it canonical\/} bilinear form on $\eus E$. The canonical bilinear form is characterised
by the following property:
\begin{equation}   \label{eq:canon-2}
    \langle v,v\rangle=\sum_{\gamma\in\Delta}\langle v,\gamma\rangle \langle v,\gamma\rangle
    =2\sum_{\gamma>0}\langle v,\gamma\rangle \langle v,\gamma\rangle \ \text{ for any } v\in\eus E  .
\end{equation}
The ``strange formula''  of Freudenthal--de Vries (see \cite[47.11]{FdV}) is  
\[
     \langle \rho,\rho\rangle= \frac{\dim\g}{24} .
\]
Using our normalisation of $(\ ,\ )_\g$, the ``strange formula'' reads
\begin{equation}   \label{eq:strange}
    (\rho,\rho)_\g=\frac{\dim\g}{12}h^*(\g) .
\end{equation}
Indeed, it is well known that $\langle\theta,\theta\rangle=1/h^*(\g)$
(see e.g. \cite[Lemma\,1.1]{Cas}). Therefore, the transition factor
between two forms 
$\langle\ ,\ \rangle$ and $(\ ,\ )_\g$ (considered as forms on $\eus E$) equals $2h^*(\g)$.
Using the transition factor, we can also rewrite Eq.~\eqref{eq:canon-2} in terms of $(\ ,\ )_\g$
:
\begin{equation}   \label{eq:norm-2}
   h^*(\g)( v,v)_\g=\sum_{\gamma>0}(v,\gamma)_\g (v,\gamma)_\g .
\end{equation}

\section{The index of a principal $\tri$-subalgebra} 
\label{sect:algebra}

\noindent 
If $e\in\g$ is nilpotent, then the exists a subalgebra $\ah\subset\g$ such that $\ah\simeq\tri$
and $e\in\ah$ (Morozov, Jacobson). If $e$ is a {\it principal\/} nilpotent element, then the corresponding  $\tri$-subalgebra is also called principal. (See \cite[\S\,9]{dy} and 
\cite[Sect.\,5]{ko59} for properties of
principal $\tri$-subalgebras.)
Let $(\tri)^{pr}$ be a principal $\tri$-subalgebra of $\g$.
In this section, we obtain a uniform expression for 
$\ind((\tri)^{pr}\hookrightarrow \g)$. 

Recall that $\Delta$ has at most two root lengths. Let $\theta_s$ denote the short dominant
root in $\Delta^+$. (Hence $\theta=\theta_s$ if and only if $\Delta$ is simply-laced.)
Set $r=\|\theta\|^2/\|\theta_s\|^2 \in\{1,2,3\}$.
Along with $\g$, we also consider the Langlands
dual  algebra $\g^\vee$, which is determined by the dual root system $\Delta^\vee$.
Since the Weyl groups of $\g$ and $\g^\vee$ are isomorphic, we have $h(\g)=h(\g^\vee)$.
However, the dual Coxeter numbers can be different (cf. $\GR{B}{n}$ and $\GR{C}{n}$). 

The half-sum of positive roots for $\g^\vee$ is
\[
   \rho^\vee:=\frac{1}{2}\sum_{\gamma>0} \gamma^\vee=\sum_{\gamma>0}\frac{\gamma}{
(\gamma,\gamma)_\g} .
\] 
It is well-known (and easily verified) that $(\rho^\vee,\gamma)_\g=\hot(\gamma)$ for any 
$\gamma\in\Delta^+$. (This equality does not depend on the normalisation of a bilinear form.)
It follows that $h^*(\g^\vee)=(\rho^\vee, \theta_s)=\hot(\theta_s)$.

\begin{prop}  \label{prop:ht2}
For a simple Lie algebra $\g$ with the corresponding root system $\Delta$, we have
\begin{equation}   \label{eq:ht2}
   \sum_{\gamma>0} \hot^2(\gamma)=\frac{\dim\g}{12}h^*(\g)h^*(\g^\vee)r .
\end{equation}
\end{prop}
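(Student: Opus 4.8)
The plan is to read the left-hand side as a sum of squares of values of the linear functional $(\rho^\vee,-)_\g$ and then to feed it into the normalised canonical-form identity \eqref{eq:norm-2}. Since $(\rho^\vee,\gamma)_\g=\hot(\gamma)$ for every $\gamma\in\Delta^+$ (recalled just before the statement), I would first rewrite
\[
   \sum_{\gamma>0}\hot^2(\gamma)=\sum_{\gamma>0}(\rho^\vee,\gamma)_\g(\rho^\vee,\gamma)_\g .
\]
Applying \eqref{eq:norm-2} with $v=\rho^\vee$ collapses the right-hand side to $h^*(\g)(\rho^\vee,\rho^\vee)_\g$. Thus the entire problem reduces to evaluating the single scalar $(\rho^\vee,\rho^\vee)_\g$.

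To compute $(\rho^\vee,\rho^\vee)_\g$ I would invoke the ``strange formula'' \eqref{eq:strange}, but applied to the Langlands dual algebra $\g^\vee$ rather than to $\g$ itself. Since $\rho^\vee$ is exactly the half-sum of the positive roots of $\g^\vee$ and $\dim\g^\vee=\dim\g$, the strange formula for $\g^\vee$ reads
\[
   (\rho^\vee,\rho^\vee)_{\g^\vee}=\frac{\dim\g}{12}h^*(\g^\vee),
\]
where $(\ ,\ )_{\g^\vee}$ is the form normalised so that the long roots of $\g^\vee$ have squared length $2$. Combined with the previous paragraph, this would give the claimed formula, provided I can pass between the two forms $(\ ,\ )_\g$ and $(\ ,\ )_{\g^\vee}$.

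The main obstacle is precisely this change of normalisation, and it is where the factor $r$ enters. Here I would track the effect of duality on root lengths: with $(\theta,\theta)_\g=2$, a short root $\gamma$ of $\g$ has $(\gamma,\gamma)_\g=2/r$, so its coroot $\gamma^\vee=2\gamma/(\gamma,\gamma)_\g=r\gamma$ satisfies $(\gamma^\vee,\gamma^\vee)_\g=2r$. These coroots are the \emph{long} roots of $\g^\vee$, so they carry squared length $2r$ with respect to $(\ ,\ )_\g$ but squared length $2$ with respect to $(\ ,\ )_{\g^\vee}$. Hence $(\ ,\ )_{\g^\vee}=\frac{1}{r}(\ ,\ )_\g$ on the span of the coroots, and in particular $(\rho^\vee,\rho^\vee)_\g=r\,(\rho^\vee,\rho^\vee)_{\g^\vee}$. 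Substituting the strange-formula value yields $(\rho^\vee,\rho^\vee)_\g=r\frac{\dim\g}{12}h^*(\g^\vee)$, and multiplying by the factor $h^*(\g)$ coming from \eqref{eq:norm-2} produces $\frac{\dim\g}{12}h^*(\g)h^*(\g^\vee)r$, as desired. The only delicate point is the bookkeeping of which roots become long or short under duality and the resulting scaling constant; everything else is a direct substitution.
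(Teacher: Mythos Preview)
Your proof is correct and follows essentially the same route as the paper: apply \eqref{eq:norm-2} with $v=\rho^\vee$ to obtain $\sum_{\gamma>0}\hot^2(\gamma)=h^*(\g)(\rho^\vee,\rho^\vee)_\g$, then evaluate $(\rho^\vee,\rho^\vee)_\g$ via the strange formula for $\g^\vee$ together with the scaling factor $r$ between $(\ ,\ )_\g$ and $(\ ,\ )_{\g^\vee}$. Your treatment of the normalisation step is slightly more explicit than the paper's, but the argument is the same.
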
\begin{proof}
The equality in \eqref{eq:ht2} is essentially equivalent to the "strange formula". 

Applying Eq.~\eqref{eq:norm-2}  to  $v=\rho^\vee$, we obtain
\begin{equation}   \label{eq:hot2}
  h^*(\g)(\rho^\vee,\rho^\vee)_\g=\sum_{\gamma>0}(\rho^\vee,\gamma)_\g(\rho^\vee,\gamma)_\g=\sum_{\gamma>0}\hot^2(\gamma) .
\end{equation}
For $\g^\vee$, the strange formula says that
$(\rho^\vee,\rho^\vee)_{\g^\vee}=\displaystyle\frac{\dim\g}{12}h^*(\g^\vee)$.
Although the normalised bilinear forms $(\ ,\ )_\g$ and  $(\ ,\ )_{\g^\vee}$ are proportional
upon restriction to $\eus E$, they are not 
equal in general. 
Indeed,  the square of the length of a long root in $\Delta^{\vee}$ with respect to 
$(\ ,\ )_\g$ equals $2r$. Hence the transition factor is $r$ and 
\begin{equation}   \label{eq:ro-check}
    (\rho^\vee,\rho^\vee)_\g=r(\rho^\vee,\rho^\vee)_{\g^\vee}=\frac{\dim\g}{12}h^*(\g^\vee)r .
\end{equation}
Then the  assertion follows from \eqref{eq:hot2} and \eqref{eq:ro-check}.
\end{proof}

\begin{thm}   \label{thm:main}
$\ind((\tri)^{pr}\hookrightarrow \g)=\displaystyle\frac{\dim\g}{6}h^*(\g^\vee) r$.
\end{thm}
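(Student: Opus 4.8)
The plan is to reduce the index of the subalgebra to an index of a \emph{representation} via \eqref{eq:ind-subalg}, to compute that representation index by the explicit trace formula \eqref{eq:ind_yavno}, and then to substitute Proposition~\ref{prop:ht2}. By \eqref{eq:ind-subalg} together with Example~\ref{ex:1}(2),
\[
   \ind((\tri)^{pr}\hookrightarrow\g)=\frac{\ind_D((\tri)^{pr},\g)}{\ind_D(\g,\ad_\g)}=\frac{\ind_D((\tri)^{pr},\g)}{2h^*(\g)} ,
\]
so the whole problem reduces to computing the Dynkin index of the adjoint module $\g$ regarded as a module over the principal $\tri$.

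To compute $\ind_D((\tri)^{pr},\g)$ I would apply \eqref{eq:ind_yavno} with $x=h$, where $\{e,h,f\}$ is a principal $\tri$-triple and $h\in\te$ is its semisimple element. The one fact from the theory of principal $\tri$-subalgebras that I need is that $h$ may be chosen to be the \emph{principal} semisimple element, characterised by $\ap(h)=2$ for every simple root $\ap$ (equivalently, $h=2\rho^\vee$ as an element of $\te$). Consequently $\ad_\g(h)$ acts on the root space $\g_\gamma$ by the scalar $\gamma(h)=2\hot(\gamma)$ and acts by $0$ on $\te$, so
\[
   \tr\bigl(\ad_\g(h)^2\bigr)=\sum_{\gamma\in\Delta}4\hot^2(\gamma)=8\sum_{\gamma>0}\hot^2(\gamma) .
\]
Since $\ind_D(\tri,\sfr_1)=1$ by Example~\ref{ex:1}(1), the restriction of $(\,,\,)_{\tri}$ to the line $\bbk h$ satisfies $(h,h)_{\tri}=2$ (the standard Cartan generator of $\tri$ has eigenvalues $\pm 1$ on the two-dimensional module). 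Hence \eqref{eq:ind_yavno} gives
\[
   \ind_D((\tri)^{pr},\g)=\frac{\tr\bigl(\ad_\g(h)^2\bigr)}{(h,h)_{\tri}}=4\sum_{\gamma>0}\hot^2(\gamma) .
\]

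It then remains to invoke Proposition~\ref{prop:ht2}, which rewrites the last sum as $\frac{\dim\g}{12}h^*(\g)h^*(\g^\vee)r$; thus $\ind_D((\tri)^{pr},\g)=\frac{\dim\g}{3}h^*(\g)h^*(\g^\vee)r$, and dividing by $2h^*(\g)$ yields the asserted value $\frac{\dim\g}{6}h^*(\g^\vee)r$. The only step carrying real content is the identification of the principal $h$ through $\ap(h)=2$, which turns the eigenvalue of $\ad_\g(h)$ on $\g_\gamma$ into $2\hot(\gamma)$ and thereby couples the Dynkin index to the height statistic $\sum_{\gamma>0}\hot^2(\gamma)$ already evaluated in Proposition~\ref{prop:ht2}. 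Everything else---the value $(h,h)_{\tri}=2$ and the factor $2h^*(\g)$ coming from $\ind_D(\g,\ad_\g)$---is routine bookkeeping of normalisations, so I expect no serious obstacle beyond correctly pinning down the principal semisimple element.
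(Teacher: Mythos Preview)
Your argument is correct and follows essentially the same route as the paper: reduce via \eqref{eq:ind-subalg} and Example~\ref{ex:1}(2), identify the principal semisimple element $h$ by $\ap(h)=2$, compute the trace as $\sum_{\gamma>0}\hot^2(\gamma)$ up to normalisation, and invoke Proposition~\ref{prop:ht2}. The paper's proof differs only cosmetically, passing through $\ind_{AVE}$ and the rescaled element $\tilde h=h/2$ rather than computing $\ind_D$ and $(h,h)_{\tri}$ directly.
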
\begin{proof}
Combining Eq.~\eqref{eq:ind-subalg}, Example~\ref{ex:1}(2), and Definition~\ref{def:ave} yields the
following  formula for the index of a simple subalgebra $\es$ in $\g$:
\begin{equation}  \label{eq:ind-s-g}
    \ind(\es\hookrightarrow\g)=\frac{h^*(\es)}{h^*(\g)}\cdot \ind_{AVE}(\es, \g) .
\end{equation}
We use this formula with $\es=(\tri)^{pr}$. 
Let $h$ be the semisimple element of a principal $\tri$-triple. Without loss of generality, we may  assume that $h$ is dominant. Then $\ap(h)=2$ for any $\ap\in\Pi$. 
Put $\tilde h=h/2$. Then $\gamma(\tilde h)=\hot(\gamma)$ for any $\gamma\in\Delta$ and
$\ad\,\tilde h$ has the eigenvalues $-1,\,0,\,1$ in  $(\tri)^{pr}$. Hence
\[
   \ind_{AVE}((\tri)^{pr}, \g)=\frac{\tr(\ad_\g\, \tilde h)^2}{\tr(\ad_\es\,\tilde h)^2}=
   \frac{\sum_{\gamma\in\Delta}\hot^2(\gamma)}{2}=\sum_{\gamma>0}\hot^2(\gamma) .
\]
Since $h^*(\tri)=2$, the theorem follows from Proposition~\ref{prop:ht2} and 
Eq.~\eqref{eq:ind-s-g}.
\end{proof}

Below, we tabulate the values of index for all simple Lie algebras.

\begin{center}
\begin{tabular}{c|ccccccccc|}
$\g$ & $\GR{A}{n}$ & $\GR{B}{n}$ & $\GR{C}{n}$& $\GR{D}{n}$ & $\GR{E}{6}$ & $\GR{E}{7}$ &  $\GR{E}{8}$ & $\GR{F}{4}$ & $\GR{G}{2}$ \\ \hline 
$\ind((\tri)^{pr}\hookrightarrow \g)$ & \rule{0pt}{2.7ex}$\genfrac{(}{)}{0pt}{}{n+2}{3}$ &
$\frac{n(n+1)(2n+1)}{3}$ & $\genfrac{(}{)}{0pt}{}{2n+1}{3}$ & $\frac{(n-1)n(2n-1)}{3}$ &
$156$ & $399$ & $1240$ & $156$ & $28$ \\
 \end{tabular}
\end{center}

\begin{rmk}
For the exceptional Lie algebras,  Dynkin computed the indices of \un{all} $\tri$-subalgebras,
see \cite[Tables\, 16--20]{dy}.
\end{rmk}

Note that the index of a principal $\tri$ is preserved under the unfolding procedure
$\g\leadsto \tilde\g$
applied to multiply laced Dynkin diagram. Namely, 
$\ind((\tri)^{pr}\hookrightarrow \g)=\ind((\tri)^{pr}\hookrightarrow \tilde\g)$, where the four pairs
$(\g,\tilde\g)$ are: $(\GR{C}{n},\GR{A}{2n-1})$,  $(\GR{B}{n},\GR{D}{n+1})$, 
$(\GR{F}{4},\GR{E}{6})$, $(\GR{G}{2},\GR{D}{4})$.
This is, of course, explained by the multiplicativity of the index of subalgebras and the fact that $\ind(\g\hookrightarrow \tilde\g)=1$.

\begin{rmk}
Proposition~\ref{prop:ht2} provides a uniform expression for $\sum_{\gamma>0}
\hot^2(\gamma)$. One might ask for a similar formula for $\sum_{\gamma>0}\hot(\gamma)$.
However, such a formula seems to only exist in the simply-laced case. Indeed, for any
$\g$ we have $2(\rho,\rho^\vee)_\g=\sum_{\gamma> 0}(\gamma,\rho^\vee)_\g=
\sum_{\gamma>0}\hot(\gamma)$. If $\Delta$ is simply-laced, then $\rho^\vee=
2\rho/(\theta,\theta)_\g=\rho$, and using the ``strange formula'' one obtains
\[
  \sum_{\gamma>0}\hot(\gamma)=2(\rho,\rho)_\g=\frac{\dim\g}{6}h(\g) \ .
\]
{\sl Question.} Consider the function $s\mapsto f(s)=\sum_{\gamma>0}\hot^s(\gamma)$.
Are there some other values of $s$ such that $f(s)$ has a nice closed expression ?
\end{rmk}

\section{Some applications} 
\label{sect:appl}

\noindent
{\bf (A)} \ Let $\nu:\g\to \mathfrak{sl}(V_\lb)$ be an irreducible representation.  
Our first observation is that using Theorems~\ref{thm:d-1952} and \ref{thm:main} we can immediately
compute the Dynkin index of $V_\lb$ as $(\tri)^{pr}$-module:
\begin{multline*}
\ind_D((\tri)^{pr}, V_\lb)=\ind((\tri)^{pr}\hookrightarrow \mathfrak{sl}(V_\lb))=
\ind((\tri)^{pr}\hookrightarrow \g){\cdot}\ind(\g\hookrightarrow  \mathfrak{sl}(V_\lb))= \\
  \ind((\tri)^{pr}\hookrightarrow \g) \cdot \ind_D(\g,V_\lb)=
  \frac{\dim\g}{6} h^*(\g^\vee)r \cdot \frac{\dim V_\lb}{\dim\g}(\lb,\lb+2\rho)_\g=
  \frac{\dim V_\lb}{6}{\cdot} h^*(\g^\vee){\cdot}r{\cdot} (\lb,\lb+2\rho)_\g \ .
\end{multline*}
Furthermore, we have
\begin{equation}     \label{eq:D-AVE}
   \ind_D((\tri)^{pr}, V_\lb)=\ind_D(\tri, \ad){\cdot}\ind_{AVE}((\tri)^{pr}, V_\lb)=
   4{\cdot}\ind_{AVE}((\tri)^{pr}, V_\lb) 
\end{equation}
and
\[
   \ind_{AVE}((\tri)^{pr}, V_\lb)=\frac{\tr(\nu(\tilde h)^2)}{\tr((\ad\, \tilde h)^2)}=
   \frac{\sum_{\mu\dashv  V_\lb} \mu(\tilde h)^2}{2} .
\]
where notation $\mu\dashv V_\lb$ means that $\mu$ is a weight of $V_\lb$, and the
sum runs over all weights according to their multiplicities. 
Since $\mu(\tilde h)=(\mu,\rho^\vee)_\g$, we finally obtain
\begin{equation}  \label{eq:vee-kvadrat}
   \sum_{\mu\dashv  V_\lb}(\mu,\rho^\vee)^2_\g=
   \frac{\dim V_\lb}{12} {\cdot}h^*(\g^\vee){\cdot}r{\cdot} (\lb,\lb+2\rho)_\g  \ .
\end{equation}   
This can be compared with the formula of Freudenthal--de Vries
(see \cite[47.10.2]{FdV}):
\begin{equation}  \label{eq:rho-kvadrat}
  \sum_{\mu\dashv  V_\lb}\langle\mu,\rho\rangle^2=\frac{\dim V_\lb}{24}
  \langle \lb,\lb+2\rho\rangle  \ .
\end{equation}
One can verify that Eq.~\eqref{eq:vee-kvadrat} and \eqref{eq:rho-kvadrat}
agree in the simply-laced case, where $\rho$ is proportional to $\rho^\vee$.

{\bf (B)} \ Let $m_1,\dots, m_n$ be the exponents of $\g$.  Regarding $\g$ as $(\tri)^{pr}$-module,
one has $\g= \bigoplus_{i=1}^n \sfr_{2m_i}$ \cite[Cor.\,8.7]{ko59}. 
Then using Example~\ref{ex:1}(1),
Eq.~\eqref{eq:ind-s-g}, \eqref{eq:D-AVE}, and the additivity of the index of representations, 
we obtain the identity
\begin{multline*}
 \frac{\dim\g}{6} h^*(\g^\vee)r  =\ind((\tri)^{pr}\hookrightarrow \g)=
 \frac{h^*(\tri)}{h^*(\g)} \sum_{i=1}^n \ind_{AVE}(\tri,\sfr_{2m_i})=
  \\
  \frac{1}{2h^*(\g)}\sum_{i=1}^n\ind_D(\tri,\sfr_{2m_i})=\frac{1}{2h^*(\g)}\sum_{i=1}^n
  \genfrac{(}{)}{0pt}{}{2m_i+2}{3} .
\end{multline*}


\begin{thebibliography}{33}

\bibitem{AVE} 
{\rusc E.M.\,Andreev, {E1}.B.\,Vinberg, A.G.\,{E1}lashvili}.
{\rus Orbity naibol{\cprime}she{\u\i} razmernosti polu\-pros\-tyh line\u\i nyh grupp Li},
{\rusi Funkc. analiz i ego prilozh.} {\rus t.}{\bf 1}, {\rus vyp.}\,4 (1967), 
3--7 (Russian).
English translation: 
{\sc E.M.\,Andreev, E.B.\,Vinberg} and {\sc A.G.\,Elashvili}.
Orbits of greatest dimension in semisimple linear Lie groups,
{\it Funct. Anal. Appl.} {\bf 1}(1967), 257--261.


\bibitem{dy} {\rusc E.B.~Dynkin}. {\rus Poluprostye podalgebry poluprostykh 
algebr Li}, {\rusi  Matem. Sbornik\/} {\rus t.}{\bf 30}, {\rus N0}\,2 (1952), 349--462
(Russian). 
English translation: {\sc E.B.~Dynkin}.
Semisimple subalgebras of semisimple Lie algebras,
{\it  Amer. Math. Soc. Transl.}, II~Ser.,
{\bf 6}~(1957), 111--244.

\bibitem{FdV}  {\sc H.~Freudenthal} and {\sc H.~de~Vries}.
``Linear Lie groups'',  New York: Academic Press, 1969.


\bibitem{ko59} {\sc B.~Kostant}.  The principal three-dimensional subgroup and the Betti 
numbers of a complex simple Lie group polynomial rings, 
{\it  Amer. J. Math.} {\bf 81}(1959), 973--1032.

\bibitem{on}
{\sc A.L.~Onishchik}. 
``Topology of  transitive transformation groups'', Leipzig:
J.\,Barth--Verlag, 1994.

\bibitem{Cas} {\sc D.~Panyushev}. Isotropy representations, eigenvalues of a Casimir element,
and commutative Lie subalgebras, {\it J. London Math. Soc.},
{\bf 64}(2001), 61--80.


\end{thebibliography}
\end{document}